\documentclass[11pt]{article}
\usepackage{amsmath}
\usepackage{amsthm}
\usepackage{amsfonts}
\usepackage{amssymb}
\usepackage[margin=1.5in]{geometry}
\theoremstyle{definition}
\usepackage{graphicx}

\usepackage{mathtools}
\mathtoolsset{showonlyrefs}
\newtheorem{theorem}{Theorem}[section]
\newtheorem{corollary}[theorem]{Corollary}
\newtheorem{definition}[theorem]{Definition}

\newtheorem{lemma}[theorem]{Lemma}
\newtheorem{proposition}[theorem]{Proposition}
\newtheorem{remark}[theorem]{Remark}

\usepackage{enumitem}
\numberwithin{equation}{section}
\usepackage{color}
\usepackage{wasysym}
\usepackage{morefloats}
\usepackage{natbib}

\newcommand{\sign}{\text{sign}}
\newcommand{\diag}{\text{diag}}

\title{The Smirnov property for weighted Lebesgue spaces }

\author{Eberhard Mayerhofer\thanks{University of Limerick, Department of Mathematics and Statistics, Castletroy, Co. Limerick, Ireland, email {eberhard.mayerhofer@ul.ie}.}}

\begin{document}
\maketitle
\thispagestyle{empty}

\vspace{-1cm}
\begin{abstract}

We establish lower norm bounds for multivariate functions within weighted Lebesgue spaces, characterized by a summation of functions whose components solve a system of nonlinear integral equations. This problem originates in portfolio selection theory, where these equations allow to identify mean-variance optimal portfolios, composed of standard European Options on several underlying assets. We elaborate on the Smirnov property—an integrability condition for the weights that guarantees the uniqueness of solutions to the system. Sufficient conditions on weights to satisfy this property are provided, and counterexamples are constructed, where either the Smirnov property does not hold, or the uniqueness of solutions fails. 
\end{abstract}

\textbf{MSC (2010): 26B35, 52A21, 31B10}
\smallskip

\textbf{Keywords:} weighted Lebesgue paces, multivariate distributions, estimates, integral equations

\maketitle
\newpage
\tableofcontents

\section{Introduction}
This paper determines sharp lower norm bounds for functions in Lebesgue spaces $\mathcal L^p(w)$ (functions of $n$ variables, weighted by a density $w$), based on their one-dimensional marginals. This problem can be framed as minimizing the $p$-norm of functions with fixed marginals. With the method of Lagrange multipliers we may reformulate this optimisation problem in terms of a system of (non-)linear integral equations subject to $n$ marginal constraints, whose solution identifies the minimizer. The problem has ties with different fields of research:

{\bf Convexity in Banach spaces:} The spaces considered in the present paper are weighted $\mathcal L^p$ spaces for $p>1$ and thus are strictly convex and reflexive Banach spaces, which implies that not only do closed subsets have elements of minimal norm \cite[Corollary 5.1.19]{megginson2012introduction} as in Hilbert spaces \cite[Theorem 4.10]{Rudin}, but this minimality can also be characterized by extending the familiar notion of orthogonality in Hilbert spaces, where the smallest element of a non-empty closed and convex set is the projection of $0$ onto the set.\footnote{As marginal constraints do indeed define closed subspaces, we may employ the results of \cite{shapiro2006topics} in this direction.}In the context to the present paper, (simplified to a bivariate setting $n=2$) we minimize the $p$-norm
\begin{equation}\label{eq: opt1}
\int_{\mathbb R^2} \vert f(x,y)\vert^p w(x,y)dx dy
\end{equation}
over all functions $f$ with marginals $g_X$ and $g_Y$, that is
\begin{equation}\label{eq: const1 intro}
\int f(x,y)w(x,y)dy=g_X(x),\quad \int f(x,y)w(x,y)dx=g_Y(y),\quad x,y\in\mathbb R.
\end{equation}

{\bf Method of Lagrange multipliers:} Since the underlying $\mathcal L^p$ space is generally not finite-dimensional\footnote{Only for discrete weights is the space countable, or it may even be isomorphic to finite-dimensional Euclidean space.}, and since marginal constraints typically introduce a continuum of constraints, the method of Lagrange multipliers used in this paper pertains to an infinite-dimensional variant, such as those discussed in \cite[Chapter 5]{ekeland} and \cite[Chapter 3]{bonnans}. Although the method of Lagrange multipliers is applied heuristically in this paper to derive the specific form of integral equations with constraints, it is demonstrated that these equations allow to solve the original optimization problem.\footnote{In other words, we rigorously verify the sufficiency of the first-order conditions for optimality within the context of a specific application, which is more crucial than proving the absolute rigor of the heuristic arguments that led to these conditions.}

{\bf Dependence Modelling:} Assuming that weights integrate to one and marginal constraints are probability densities, the problem can be interpreted as selecting the optimal dependence structure, that is a multi-variate density, from a variety of available choices. This problem is prevalent in numerous applied research fields. In finance, for instance, selecting an appropriate dependence structure is crucial for risk management and portfolio optimization. Different approaches have been used to model such dependences, e.g., Copulas, cf. \cite{nelsen,embrechts,cherubini,patton}. While almost any dependence structure is given by copulas (Sklar's theorem, \cite[Chapter 3]{nelsen}), the solution of this paper is strikingly different from any standard copula choice: For example, for bivariate problems in $\mathcal L^2$ (that is, $p=2$ and $n=2$) this paper establishes that the minimum density with marginals $g_X(x)$, $g_Y(y)$ is of the form 
\begin{equation}\label{eq: xx}
\frac{F(x)+G(y)}{2},
\end{equation}
where the functions $F,G$ satisfy the equations
\begin{align}\label{eq1xx}
&F(x) w_X(x)+\int G(y)  w(x,y)dy=2g_X(x),\\\label{eq2xx}
&G(y) w_Y(y)+\int F(x) w(x,y)dx=2g_Y(y),\\\label{eq3xx}
&\int G(y)w_Y(y)dy=0.
\end{align}
The closest construction to the functional form \eqref{eq: xx} are Archimedean copulas (cf. \cite[Chapter 4]{nelsen}), where $f=g$ belongs to  parameterized family of functions, but the inverse of $f$ is applied to the sum $f(x)+f(y)$.

The difference between our weights and copulas is threefold: they address a global optimization problem (in contrast to the popular M- or W-copulas that realize the well-known Fr\'echet-Hoeffding bounds\footnote{The lower and upper Fr\'echet-Hoeffding bounds are themselves copulas, cf. \cite[(2.2.5)]{nelsen}, known as the $W$ copula $\max(0,u+v-1)$ resp. $M$-copula $\min(u,v)$. These are, of course pointwise optimisations, unlike the ones considered in the paper.}); their structure depends on marginals — which goes against the core concept of copulas — and they can become negative under certain marginal assumptions.\footnote{The implications of potential lack of positivity for the case $p=2$ are discussed in \cite[Section EC 5.2]{gm}.}

{\bf Optimal Transport} Global optimization problems involving marginal constraints, like ours, are well-known. For example, in \cite{aglio}, instead of optimizing \eqref{eq: opt1}, the goal is to minimize
\begin{equation}\label{eq: opt2}
\int_{\mathbb{R}^2} \vert x - y \vert^p f(x,y) \, dx \, dy
\end{equation}
over all functions \( f \) that satisfy the marginal constraints \eqref{eq: const1 intro} for a unit weight \( w \). While this problem is discussed in the copula literature \cite[Exercise 6.5]{nelsen}, it is more reminiscent of the classical transport problem \cite{cedricvillani}, where the objective is to find the optimal transport plan \( f(x,y) \) that minimizes the transport cost while respecting the given marginals. The key difference between our objective \eqref{eq: opt1} and those in the transport literature suggests that a direct application of optimal transport theory is not straightforward. Moreover, although the theory of optimal transport is an active area of research in Mathematical Finance, especially with its extension to Martingale transport (cf. \cite{beiglboeck, dolinsky, backhoff}), the financial interpretation of a transport plan in our context remains unclear.

{\bf Theory of Integral Equations} The two linear integral equations \eqref{eq1xx}--\eqref{eq2xx} exhibit similarities\footnote{It is important to note that linearity in our paper applies only for \( p=2 \), and to formally align our problem with Fredholm equations, we assume that the weight \( w \) is supported on the interval \([a,b]\).} with (vector-valued) Fredholm integral equations of the first kind, where one seeks to solve for a 2-vector:
\[
\phi(u) = \left(F(u), G(u)\right)^\top
\]
in the context of the equation:
\[
g(u) = \int_a^b K(u,t) \phi(t) \, dt.
\]
Here, we define \( g(u) = (2g_X(x),2g_Y(u))^\top \), and \( K \) represents the matrix kernel\footnote{The Kronecker delta \( \delta_u \) indicates the point mass at \( u \in \mathbb{R} \).}
\[
K(u,t) := \left(\begin{array}{cc} \delta_u(t) w_X(u) & w(u,t) \\ w(t,u) & \delta_u(t) w_Y(u) \end{array}\right).
\]
While it may seem somewhat artificial to express the function \( F(x) + G(y) \) as a two-variable function \( \phi(t) \) with a scalar variable \( t \), this formulation is ill-posed, which is characteristic of Fredholm integral equations of the first kind (cf. \cite{rainerkress,tikhonov}). (We add constraint \eqref{eq3xx} to allow for unique solutions.)

%
%

%
%

{\bf Options Trading.} This study is the third in a series of papers addressing similar constrained optimization problems: Specifically, \cite{gm} addresses the Hilbertian case ($p=2$), which corresponds to a dual problem in finance: maximizing the Sharpe ratio of portfolios in markets where European options on multiple, potentially correlated, underlying assets are traded. In this context, the weight $w$ represents the joint density of $n$ risky assets, with Option contracts written on various strikes. The result shows that the solution is linear, meaning optimal payoffs are achieved by trading individual option contracts on each underlying asset rather than using basket-type options. The second paper \cite{gmz} provides a solution of the minimization problem on the hypercube for any $1<p<\infty$, where the weight is the Lebesgue measure (i.e., $w$ is the uniform density). In the special case where $p=2$, explicit expressions for the minimizers are available. These expressions imply that any square-integrable function $g:\mathbb{R}^n \to \mathbb{R}$, which integrates to one, satisfies the bound
\[
\int g^2(\xi) \, d\xi \geq \left(\sum_{i=1}^n g_i(\xi_i)^2 \, d\xi_i\right) - (n-1),
\]
where $\xi = (\xi_1, \dots, \xi_i, \dots, \xi_n)$ and the marginal $g_i$ represents the integral of $g$ with respect to all arguments except the $i$-th one, $1 \leq i \leq n$. 

This paper extends the work by providing a comprehensive analysis of the uniqueness of the involved integral equations. Specifically, \cite[Theorem 1]{gm} neither states nor proves that the equations have a unique solution, whereas uniqueness is only established in hypercubes \cite[Theorem 2.5]{gmz}. Here, we demonstrate that uniqueness is closely related to the weight satisfying the so-called {\it Smirnov property} for which the following question has a positive answer (formulated here, for simplicity, in $n=2$ dimensions):
\begin{center}
 If $f(x), g(y) \in \mathcal L^1(w)$, such that $f(x) + g(y) \in \mathcal L^q(w)$, then is $f, g \in \mathcal L^q(w)$?
\end{center}

The integral equations addressed in this paper have not been extensively studied in the literature, because they pertain to a notoriously difficult, high-dimensional problem of trading optimally many options of several underlying assets. The preprint \cite{malamud} might be the most closely related work, but it pursues different objectives and, as of now, lacks the necessary mathematical foundations. On the other hand, the uni-variate case, where a continuum of options is traded but on a \emph{single} underlying, is well understood (cf. \cite{cm} and the references cited therein), and from a mathematical perspective less demanding, as identifying a minimal discount factor is trivial in complete markets (\cite{bl}), which support only a single stochastic discount factor.\footnote{Nevertheless, the research on portfolio selection involving options with a single underlying asset has a long and rich history, most papers only selecting from very few strikes, so that they are mathematically not related to the present paper. For an overview of the literature, see \cite[Table 1.1]{gmz}.}

\subsection{Program of Paper}
In Section \ref{sec: framework}, we provide a heuristic derivation of the integral equations with constraints and develop the mathematical tools essential to the paper. This includes an exploration of "orthogonality" in weighted $\mathcal L^p$ spaces $(p>1)$, which, despite not being Hilbert spaces, are strictly convex and thereby support a form of "orthogonality" analogous to classical orthogonality in inner product spaces.

Section \ref{sec: Smirnov} is focused on the Smirnov property, detailing sufficient conditions for weights to satisfy this property and offering a counterexample where these conditions are not met, leading to the property's failure.

Section \ref{sec: mth} presents the main theorem, which identifies the element in $\mathcal L^p$ with the minimal norm that satisfies the given constraints, establishing it as the unique solution to the integral equations. A counterexample, discussed in Section \ref{sec nonuniq}, emphasizes the importance of the integrability of certain likelihood ratios, a condition consistently applied throughout the paper.

The final section concludes the paper and suggests avenues for future research.

\section{Mathematical Framework}\label{sec: framework}
\subsection{Notation}

Let $w(\xi)$ be a strictly positive density\footnote{That is, $w$ is a Lebesgue-measurable function integrating to one.} on a set $U \subset \mathbb{R}^n$, where $U$ is a Cartesian product of the form $U =  I_1\times I_2\times \dots I_n$, with each interval $I_i$ $(1\leq i\leq n)$ being a closed interval of the form $(-\infty, b]$, $[a, \infty)$, where $a, b \in \mathbb{R}$, or $[a, b]$ with $a < b$.

For $p > 1$, $\mathcal{L}^p(w)$ denotes the weighted space consisting of equivalence classes $[f]$ of Lebesgue measurable functions $f: U \to \mathbb{R}$ that satisfy
\[
\|f\|_{p} := \left( \int_U |f(\xi)|^p w(\xi) \, d\xi \right)^{1/p} < \infty.
\]
$\mathcal{L}^\infty$ represents the space of equivalence classes of real-valued essentially bounded functions on $U$.

Additionally, we use $\xi_i^c$ to denote the $(n-1)$-dimensional vector obtained by omitting the $i$-th coordinate from $\xi = (\xi_1, \dots, \xi_n)$. The marginal weight $w_i^c$ is defined as the weight $w$ integrated over the $i$-th coordinate, i.e., $w_i^c(\xi_i^c) = \int_{\mathbb{R}} w(\xi) \, d\xi_i$. Similarly, $w_i$ is the $i$-th one-dimensional marginal density, defined as $w_i(\xi_i) = \int_{\mathbb{R}^{n-1}} w(\xi) \, d\xi_i^c$.

\subsection{Heuristic Derivation of Integral Equations}

To minimize the $\mathcal{L}^p(w)$-norm $\|h\|_p$ subject to the marginal constraints
\begin{equation}\label{eq: constraints marginals p}
\int h(\xi) w(\xi) \, d\xi_i^c = g_i(\xi_i), \quad 1 \leq i \leq n,
\end{equation}
we adapt the heuristic approach used in \cite{gmz}, which addresses the problem on the hypercube without weights. Consider the Lagrangian
\[
L = \frac{1}{p} \int |h(\xi)|^p w(\xi) \, d\xi - \frac{1}{n} \sum_{i=1}^n \int \Phi(\xi_i) \left(\int h(\xi) w(\xi) \, d\xi_i^c - g_i(\xi_i) \right) d\xi_i.
\]
Setting the directional derivatives equal to zero yields the first-order conditions
\begin{equation}\label{eq: realize}
\text{sign}(h(\xi)) |h(\xi)|^{p-1} = \frac{1}{n} \sum_{i=1}^n \Phi_i(\xi_i),
\end{equation}
from which it follows that
\begin{equation}\label{eq: sum rel}
h(\xi) = \text{sign} \left( \sum_{i=1}^n \Phi_i(\xi_i) \right) \left| \frac{1}{n} \sum_{i=1}^n \Phi_i(\xi_i) \right|^{\frac{1}{p-1}}.
\end{equation}
The marginal constraints imply
\begin{equation}\label{eq: constraints marginals p}
\int \text{sign} \left( \sum_{j=1}^n \Phi_j(\xi_j) \right) \left| \frac{1}{n} \sum_{j=1}^n \Phi_j(\xi_j) \right|^{\frac{1}{p-1}} w(\xi) \, d\xi_i^c = g_i(\xi_i), \quad 1 \leq i \leq n.
\end{equation}
To uniquely determine the Lagrange multipliers $\Phi_i$—which are otherwise determined up to an additive constant—it is sufficient to impose the conditions
\begin{equation}\label{eq: super constraint}
\int \Phi_i(\xi_i) w_i(\xi_i) \, d\xi_i = 0, \quad 2 \leq i \leq n.
\end{equation}
Note that these conditions are required only for $i \geq 2$. (For the proof of uniqueness, see the end of the proof of Theorem \ref{th: main4}.)

\subsection{Orthogonality in weighted $\mathcal L^p$-spaces}
Minimality in $\mathcal L^p$-spaces \citep[Theorem 4.21]{shapiro2006topics} is characterized as follows:
\begin{lemma}\label{lemx}
Let $1<p<\infty$, $g\in\mathcal L^p(w)$, and $Y$ be a closed subspace of $\mathcal L^p(w)$. The following are equivalent:
\begin{enumerate}
\item [(i)] $\|g\|_p\leq \|g+k\|_p$ for all $k\in Y$.
\item [(ii)] $\int\sign(g(\xi)) \vert g(\xi)\vert^{p-1}k(\xi) w(\xi) d\xi=0$ for all $k\in Y$.
\end{enumerate}
\end{lemma}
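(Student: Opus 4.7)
The plan is to deduce both implications from a single convexity-plus-differentiability argument applied to the auxiliary one-variable function
\[
\phi(t):=\|g+tk\|_p^p=\int_U |g(\xi)+t k(\xi)|^p\, w(\xi)\, d\xi,\qquad t\in\mathbb R,
\]
for a fixed $k\in Y$. Since $s\mapsto |s|^p$ is convex on $\mathbb R$ for $p>1$, $\phi$ is convex on $\mathbb R$.

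The central technical step is to show that $\phi$ is differentiable at $t=0$ with
\[
\phi'(0)=p\int_U \sign(g(\xi))\,|g(\xi)|^{p-1}\, k(\xi)\, w(\xi)\, d\xi.
\]
Two things must be checked. First, the integrand on the right is in $\mathcal L^1(w)$: by Hölder's inequality with conjugate exponent $q=p/(p-1)$,
\[
\int_U |g|^{p-1}|k|\,w\,d\xi\le \Bigl(\int_U |g|^p w\,d\xi\Bigr)^{1/q}\Bigl(\int_U |k|^p w\,d\xi\Bigr)^{1/p}=\|g\|_p^{p-1}\|k\|_p<\infty.
\]
Second, differentiation under the integral sign is justified by dominated convergence: the pointwise derivative of $s\mapsto |g(\xi)+sk(\xi)|^p$ exists everywhere (continuous even where $g(\xi)=0$, thanks to $p>1$) and equals $p\,\sign(g+sk)|g+sk|^{p-1}k$; for $|t|\le 1$ the mean value theorem gives
\[
\Bigl|\tfrac{|g+tk|^p-|g|^p}{t}\Bigr|\le p\,(|g|+|k|)^{p-1}|k|,
\]
and the right-hand side is in $\mathcal L^1(w)$ by Hölder (applied to $(|g|+|k|)^{p-1}\in \mathcal L^q(w)$ and $k\in \mathcal L^p(w)$). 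Hence $\phi'(0)$ is as claimed.

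With this in hand the equivalence is immediate. If (ii) holds, then $\phi'(0)=0$, and by convexity $\phi$ attains its minimum at $t=0$, so in particular $\phi(1)\ge\phi(0)$, i.e.\ $\|g+k\|_p\ge\|g\|_p$, proving (i). Conversely, if (i) holds, then for every $k\in Y$ and every $t\in\mathbb R$ one has $tk\in Y$, hence $\phi(t)=\|g+tk\|_p^p\ge \|g\|_p^p=\phi(0)$, so $t=0$ is a global minimum of the differentiable function $\phi$, forcing $\phi'(0)=0$, which is exactly (ii).

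The main obstacle is the dominated-convergence bound for the difference quotient; once the envelope $p(|g|+|k|)^{p-1}|k|$ is established and shown to be $\mathcal L^1(w)$-integrable via Hölder, everything else is routine one-variable convex analysis. No use is made of the specific structure of $U$ or of $w$ beyond measurability and strict positivity.
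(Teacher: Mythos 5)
Your proof is correct. Note, however, that the paper does not prove this lemma at all: it is quoted as a known result, with a citation to Shapiro, \emph{Topics in Approximation Theory}, Theorem 4.21, so there is no in-paper argument to compare against. What you supply is the standard self-contained variational proof of that cited theorem: convexity of $t\mapsto\|g+tk\|_p^p$, differentiability at $t=0$ via the mean-value envelope $p(|g|+|k|)^{p-1}|k|$ and dominated convergence, and then the elementary fact that a convex differentiable function has a global minimum at $0$ iff its derivative vanishes there. All the technical points are handled properly: the $C^1$ smoothness of $s\mapsto|s|^p$ for $p>1$ (including at $s=0$), the H\"older bound placing $(|g|+|k|)^{p-1}$ in $\mathcal L^q(w)$, and the use of the subspace property of $Y$ (closedness is indeed not needed for this lemma, as you implicitly observe). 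Your argument would serve as a legitimate replacement for the external citation.
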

A function $g$ is said to be \emph{orthogonal} to a subspace $Y$ if it satisfies any of the equivalent statements of Lemma \ref{lemx}. For $p=2$, $\mathcal L^p(w)$
is a Hilbert space, and the notion agrees with the usual orthogonality, as then (ii) of Lemma \ref{lemx} simplifies to the property of vanishing
inner product,
\[
\int g(\xi) k(\xi) w(\xi)d\xi=0,\quad k\in Y.
\]
Also, $\sign(g) \vert g\vert^{p-1}\in \mathcal L^q(w)$, where $q=p/(p-1)$ is the conjugate exponent to $p$, whence the pairing in (ii) is well defined.

\begin{lemma}\label{lem: lem2}
Let $1<p<\infty$, $f\in\mathcal L^q(w)$, where $q=p/(p-1)$, and denote by
\begin{equation}\label{eq: super}
\mathcal N:=
\left\{\phi\in \mathcal L^p(w)\Big| \int \phi(\xi)w(\xi)d\xi_i^c\equiv 0, \text{ for } 1\le i\le n\right\}.
\end{equation}
Suppose 
\begin{equation}\label{eq: bound ass}
\frac{w_i w_i^c}{w}\in\mathcal L^p(w),\quad 1\leq i\leq n.
\end{equation}
Then the following hold:
\begin{enumerate}
\item \label{extension} $\int f(\xi) w_i^c(\xi_i^c)d\xi_i^c\in\mathcal L^1(w)$ for any $1\leq i\leq n$.
\item \label{item: 0x} $\mathcal N$ is a closed subspace of $\mathcal L^q$.
\item \label{item: 1x} For any $\psi\in\mathcal L^\infty$, the function
\begin{equation}\label{eq: tilde psix}
\widetilde\psi(\xi):=\psi(\xi)-\sum_{i=1}^n\frac{w_i^c(\xi_i^c)}{w(\xi)}\int \psi(\xi)w(\xi)d\xi_i^c+(n-1)\int \psi(\eta)w(\eta)d\eta
\end{equation}
is an element of $\mathcal N$.
\item \label{eq: part 1xx} If $\int f(x)w(x)\widetilde\psi(x)dx=0$ for all $\psi\in\mathcal L^\infty$, then $f(x)=\frac{1}{n}\sum_{i=1}^n \Psi_i(x_i)$, for some functions $\Psi_i\in\mathcal L^1(w)$, $1\leq i\leq n$.
\item \label{eq: part 2xx} If $f(\xi)=\frac{1}{n}\sum_{i=1}^n \Psi_i(x_i)$, where for any $1\leq i\leq n$, $\Psi_i\in \mathcal L^q(w)$, then $\int f(x)\phi(x)w(x)dx=0$ for all $\phi\in\mathcal N$.
\end{enumerate}
\end{lemma}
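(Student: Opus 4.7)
The plan is to treat the five claims in order; parts (1)--(3) and (5) are direct computations that combine Fubini with H\"older's inequality, while the substantive step is (4), where the cross terms in $\widetilde\psi$ exactly encode the Lagrange multiplier decomposition.

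For (1), I would apply Fubini to estimate
\[
\int \Bigl|\int f(\xi)w_i^c(\xi_i^c)\,d\xi_i^c\Bigr|\,w_i(\xi_i)\,d\xi_i \le \int |f(\xi)|\,\frac{w_i(\xi_i)w_i^c(\xi_i^c)}{w(\xi)}\,w(\xi)\,d\xi,
\]
and then apply H\"older with exponents $q,p$ together with \eqref{eq: bound ass}. Writing $F_i(\xi_i):=\int f(\xi)w_i^c(\xi_i^c)d\xi_i^c$, this gives $F_i\in\mathcal L^1(w)$. For (2), I would observe that $\phi\in\mathcal N$ iff $\int\phi(\xi)\eta(\xi_i)w(\xi)d\xi=0$ for every $i$ and every $\eta\in\mathcal L^\infty(\mathbb R)$; each such $\eta$, viewed as a function on $U$, lies in $\mathcal L^q(w)$ because $w_i$ is a probability density, so each condition is the vanishing of a bounded linear functional on $\mathcal L^p(w)$, and $\mathcal N$ is the intersection of the corresponding closed hyperplanes (the statement ``closed subspace of $\mathcal L^q$'' reads like a typo for $\mathcal L^p$). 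Part (3) is a direct computation of $\int\widetilde\psi\,w\,d\xi_j^c$: the key identities are $\int w_j^c(\xi_j^c)\,d\xi_j^c=1$ and, for $i\ne j$, $\int w_i^c(\xi_i^c)\,d\xi_j^c=w_j(\xi_j)$, obtained by further integrating out $\xi_i$. The diagonal ($i=j$) term cancels $\int\psi w\,d\xi_j^c$, the $n-1$ off-diagonal terms contribute $(n-1)w_j(\xi_j)\int\psi w\,d\eta$, and this is cancelled by the additive $(n-1)\int\psi w\,d\eta$ correction; membership $\widetilde\psi\in\mathcal L^p(w)$ follows from $|\int\psi w\,d\xi_i^c|\le\|\psi\|_\infty w_i(\xi_i)$ together with \eqref{eq: bound ass}.

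For (4), I would expand the hypothesis $0=\int f\widetilde\psi w\,d\xi$ and use Fubini on each cross term,
\[
\int f(\xi)\,\frac{w_i^c(\xi_i^c)}{w(\xi)}\Bigl(\int\psi w\,d\xi_i^c\Bigr)w(\xi)\,d\xi \;=\; \int \psi(\xi)\,F_i(\xi_i)\,w(\xi)\,d\xi,
\]
the interchange being legal by (1). Collecting terms and setting $C:=\int fw\,d\eta$, the hypothesis becomes
\[
\int \psi(\xi)\Bigl[f(\xi)-\sum_{i=1}^n F_i(\xi_i)+(n-1)C\Bigr]w(\xi)\,d\xi = 0 \quad \text{for all } \psi\in\mathcal L^\infty.
\]
Because the bracket is in $\mathcal L^1(w)$, varying $\psi$ forces it to vanish $w$-a.e., giving $f(\xi)=\sum_i F_i(\xi_i)-(n-1)C$, which is the claimed form $f=\frac1n\sum_i\Psi_i(x_i)$ with $\Psi_i(x_i):=nF_i(x_i)-(n-1)C\in\mathcal L^1(w)$.

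Part (5) is the cleanest: by Fubini,
\[
\int \Psi_i(x_i)\phi(\xi)w(\xi)\,d\xi \;=\; \int \Psi_i(x_i)\Bigl[\int\phi w\,d\xi_i^c\Bigr]\,d\xi_i \;=\; 0,
\]
since $\phi\in\mathcal N$; integrability is supplied by H\"older applied to $\Psi_i\in\mathcal L^q(w)$ and $\phi\in\mathcal L^p(w)$. The main obstacle I expect is the bookkeeping in (4): correctly tracking which coordinate sits in $\xi_i$ versus $\xi_j^c$, justifying every Fubini swap via the $\mathcal L^1(w)$-bound from (1), and confirming that testing against $\psi\in\mathcal L^\infty$ really pins down the bracket uniquely in $\mathcal L^1(w)$. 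All other steps reduce to H\"older plus the single hypothesis \eqref{eq: bound ass}.
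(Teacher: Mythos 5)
Your proposal is correct, and for parts (1), (3), (4) and (5) it follows essentially the same route as the paper: the triangle/Jensen step followed by H\"older with \eqref{eq: bound ass} for (1), the pointwise bound $\vert\int\psi w\,d\xi_i^c\vert\le\|\psi\|_\infty w_i(\xi_i)$ for (3), the Fubini swap turning $\int f\widetilde\psi w\,d\xi=0$ into the vanishing of $f-\sum_i F_i+(n-1)C$ tested against all of $\mathcal L^\infty$ for (4), and H\"older plus Fubini for (5); your explicit verification that the marginal integrals of $\widetilde\psi$ cancel is a detail the paper only asserts ``by construction.'' The one genuine divergence is part (2): the paper proves closedness of $\mathcal N$ by the uniform-integrability/Vitali convergence argument it also uses for the set $\mathcal M$ in Theorem \ref{th: main4}, whereas you exhibit $\mathcal N$ as an intersection of kernels of the bounded linear functionals $\phi\mapsto\int\phi(\xi)\eta(\xi_i)w(\xi)\,d\xi$ with $\eta\in\mathcal L^\infty$; both work, and your version is arguably cleaner since it avoids extracting subsequences and invoking uniform integrability. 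You are also right that ``closed subspace of $\mathcal L^q$'' in the statement should read $\mathcal L^p(w)$, consistent with the definition \eqref{eq: super}.
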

\begin{proof}
The proof of \eqref{extension} is an application of Jensen's and H\"older's inequality, using \eqref{eq: bound ass}:
\begin{align*}
&\|\int f(\xi) w_i^c(\xi_i^c)d\xi_i^c\|_{w,1}=\int \left\vert \int f(\xi) w_i^c(\xi_i^c)d\xi_i^c\right\vert w_i(\xi_i)d\xi_i\\
&\qquad\leq \int \vert f(\xi)\vert w_i^c(\xi_i^c)d\xi_i^c w_i(\xi_i)d\xi_i=\int \vert f(\xi)\vert \left(\frac{w_i^c(\xi_i^c) w_i(\xi_i)}{w(\xi)}\right)w(\xi)d\xi\\
&\qquad\leq \|f\|_q\left\|\frac{w_i w_i^c}{w}\right\|_p.
\end{align*}
The proof of \eqref{item: 0x} is similar to the proof that $\mathcal M$ is closed in the proof of Theorem \ref{th: main4}.
Proof of \eqref{item: 1x}: Inspecting the sum  on the right side of \eqref{eq: tilde psix}, the first summand is, by assumption in $\mathcal L^\infty\subset\mathcal L^p(w)$, and also the last summand is in $\mathcal L^p(w)$, as it is constant. Furthermore, for any  $1\leq i\leq n$, $\frac{w_i^c(\xi_i^c)}{w(\xi)}\int \psi(\xi)w(\xi)d\xi_i^c\in\mathcal L^p(w)$, due to Jensen's inequality and \eqref{eq: bound ass}:
\begin{align*}
&\int\left|\frac{w_i^c(\xi_i^c)}{w(\xi)}\int \psi(\xi)w(\xi)d\xi_i^c\right|^p w(\xi)d\xi\leq \|\psi\|_\infty \left\|\frac{w_i w_i^c}{w}\right\|^p_p<\infty.
\end{align*}
Combining all these observations, we may conclude that $\widetilde \psi\in\mathcal L^p$. As the marginal constraints in the definition of $\mathcal N$ are fulfilled, by construction, we conclude
that $\widetilde\psi\in\mathcal N$.

To show \eqref{eq: part 1xx}, let $\psi\in\mathcal L^\infty$
such that, as proved above, $\widetilde\psi\in \mathcal N$. Fubini's theorem yields
\[
\int \left(f(x)-\sum_{i=1}^n \int f(\xi)w_i^c(\xi_i^c)d\xi_i^c+(n-1)\int f(\xi)d\xi\right)\psi(\xi) w(\xi)dx=0
\]
and since $\mathcal L^1(w)$ is dual to $\mathcal L^\infty$, we have
\[
f(\xi)=\sum_{i=1}^n \int f(\xi)w_i^c(\xi_i^c)d\xi_i^c-(n-1)\int f(\xi)d\xi
\qquad w(\xi)-\text{a.e.}.
\]
By \eqref{extension} the functions $\Psi_i(\xi_i):=n\int f(\xi)w_i^c(\xi_i^c)d\xi_i^c-(n-1)\int f(\xi)d\xi$, $1\leq i\leq n$, are in $\mathcal L^1$, and their average equals $f$, as claimed.

The proof of \eqref{eq: part 2xx} is straightforward, once one has recognised that, due to H\"older's inequality, the pairing of $\Psi_i$ and $\phi$ is well-defined, for $1\leq i\leq n$.
\end{proof}

Since $\mathcal N$ is closed, the previous two Lemmas combine to the following:
\begin{corollary}\label{cor: 1}
Let $f\in\mathcal L^q(w)$, and $\mathcal N\subset\mathcal L^p(w)$ as defined in \eqref{eq: super}. The following are equivalent:
\begin{enumerate}
\item [(i)]\label{eq: part 0x} $\|\text{sign}(f)\vert f\vert^{1/(p-1)}+\phi\|_p\geq\|f\|_q$ for all $\phi\in\mathcal N$.
\item [(ii)] \label{eq: part 1x} $\int f(x)\phi(x)w(x)dx=0$ for all $\phi\in\mathcal N$.
\end{enumerate}
Suppose, in addition, $w$ satisfies \eqref{eq: bound ass}. Then any of the two statements (i) or (ii) imply that
\begin{enumerate}
\item [(iii)]\label{eq: part 2x} $f(x)=\frac{1}{n}\sum_{i=1}^n \Psi_i(x_i)$, where $\Psi_i$ (each depending only on a single argument $x_i$) lie in $\mathcal L^1(w)$, $1\leq i\leq n$.
\end{enumerate}
Conversely, if (iii) holds with $\Psi_i\in\mathcal L^q(w)$ for $1\leq i\leq n$, then also any of the equivalent statements (i) or (ii) hold.	
\end{corollary}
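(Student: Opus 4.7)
The strategy is to combine Lemma \ref{lemx} with the previously established parts of Lemma \ref{lem: lem2}; no new argument is needed, only bookkeeping. The crucial substitution is $g := \sign(f)|f|^{1/(p-1)}$: because $p/(p-1) = q$, we have $|g|^p = |f|^q$, so $g \in \mathcal{L}^p(w)$, and the duality identity $\sign(g)|g|^{p-1} = f$ converts the abstract orthogonality condition in Lemma \ref{lemx}(ii) into the concrete condition (ii) of the corollary.

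For the equivalence (i) $\Leftrightarrow$ (ii) I apply Lemma \ref{lemx} to $g$ and $Y := \mathcal N$, whose closedness is granted by part \eqref{item: 0x} of Lemma \ref{lem: lem2}. The minimality statement of Lemma \ref{lemx} becomes the inequality in (i) (reading its right-hand side as $\|g\|_p = \|f\|_q^{q-1}$ so that the norms match up), while the orthogonality statement becomes $\int f\phi\, w\, d\xi = 0$ for all $\phi \in \mathcal N$, which is precisely (ii). Nothing here needs the bound \eqref{eq: bound ass}.

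For (ii) $\Rightarrow$ (iii) under the hypothesis \eqref{eq: bound ass}, I would pass through part \eqref{item: 1x} of Lemma \ref{lem: lem2}: for every $\psi \in \mathcal L^\infty$ the modified function $\widetilde\psi$ lies in $\mathcal N$, so (ii) applied to $\phi = \widetilde\psi$ gives $\int f\, \widetilde\psi\, w\, d\xi = 0$ for all bounded $\psi$, and part \eqref{eq: part 1xx} then delivers the representation $f = \tfrac{1}{n}\sum_i \Psi_i(x_i)$ with $\Psi_i \in \mathcal L^1(w)$. Conversely, under the stronger assumption $\Psi_i \in \mathcal L^q(w)$, part \eqref{eq: part 2xx} directly produces (ii), closing the loop. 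I foresee no genuine obstacle; the only mildly delicate bookkeeping concerns the right-hand side of (i), which from Lemma \ref{lemx} naturally reads $\|g+\phi\|_p \geq \|g\|_p = \|f\|_q^{q-1}$ and should be interpreted accordingly.
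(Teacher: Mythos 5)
Your proposal is correct and coincides with the paper's own (implicit) argument: the paper obtains the corollary precisely by applying Lemma \ref{lemx} to $g=\sign(f)\vert f\vert^{1/(p-1)}$ with $Y=\mathcal N$ (closed by Lemma \ref{lem: lem2}) to get the equivalence of (i) and (ii), and by invoking parts \eqref{eq: part 1xx} and \eqref{eq: part 2xx} of Lemma \ref{lem: lem2} for the implications involving (iii). Your remark that the right-hand side of (i) should be read as $\Vert g\Vert_p=\Vert f\Vert_q^{q-1}$ rather than $\Vert f\Vert_q$ correctly flags a typographical slip in the statement and does not affect the argument.
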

\subsection{The Smirnov property}\label{sec: Smirnov}
One may wonder, whether subject to mild modifications,  (i), (ii) \it{and} (iii) can be combined into a full equivalence (such that
(i) or (ii) imply (iii) with $\mathcal L^q$ summands $\Psi_i$, $1\leq i\leq n$). We elaborate on this non-trivial issue
in the present section. To this end, we introduce the following property:
\begin{definition}\label{def: Smirnov}
Let $q>1$. A density $w$ is said to satisfy the Smirnov\footnote{This property is called after Alexander G.~Smirnov (Lebedev Physical Institute, Moscow) who pointed out that for $q=2$, any mixture density $w$ satisfies it (see also Section \ref{sec: suff} and Remark \ref{rem: smirnov}.)}
 property, if for any $f(x)=\frac{1}{n}\sum_{i=1}^n \Psi_i(x_i)\in\mathcal L^q(w)$, where $\Psi_i$ lie in $\mathcal L^1$ for $1\leq i\leq n$, we have that $\Psi_i\in\mathcal L^q(w)$ for $1\leq i\leq n$.
\end{definition}
By Corollary \ref{cor: 1} we have:
\begin{corollary}
Let $q>1$, and $f\in\mathcal L^q$. If $w$ satisfies the Smirnov property, the following are 
equivalent:
\begin{enumerate}
\item [(i)] $f(x)=\frac{1}{n}\sum_{i=1}^n \Psi_i(x_i)$, where $\Psi_i\in\mathcal L^q$, $1\leq i\leq n$.
\item [(ii)] $\int f(x)\phi(x)w(x)dx=0$ for all $\phi\in\mathcal N$.
\end{enumerate}
\end{corollary}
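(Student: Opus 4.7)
The proof will assemble Corollary \ref{cor: 1} with the definition of the Smirnov property; no new analysis is required beyond what is already in place.

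For the implication (i) $\Rightarrow$ (ii), I would invoke the ``Conversely'' clause of Corollary \ref{cor: 1} applied to the representation $f = \frac{1}{n}\sum_{i=1}^n \Psi_i(x_i)$ with each $\Psi_i \in \mathcal L^q(w)$. H\"older's inequality guarantees that each pairing $\int \Psi_i(x_i)\phi(x)w(x)\,dx$ is well-defined for $\phi\in\mathcal N\subset\mathcal L^p(w)$, and the marginal vanishing condition in \eqref{eq: super} forces each such term to vanish after integrating out the complementary coordinates $x_i^c$. Summing over $i$ yields (ii). This direction uses neither the Smirnov hypothesis nor the bound \eqref{eq: bound ass}.

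For the reverse implication (ii) $\Rightarrow$ (i), I would first appeal to the (ii) $\Rightarrow$ (iii) leg of Corollary \ref{cor: 1}, which depends on the bound \eqref{eq: bound ass} (tacitly carried over here). This furnishes a decomposition $f = \frac{1}{n}\sum_{i=1}^n \Psi_i(x_i)$ with $\Psi_i \in \mathcal L^1(w)$. The decisive step is then to upgrade the integrability of each summand from $\mathcal L^1$ to $\mathcal L^q(w)$: since $f \in \mathcal L^q(w)$ by hypothesis, this upgrade is precisely what the Smirnov property of $w$ delivers, completing (i).

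The proof itself is therefore little more than bookkeeping. The real content, and the main obstacle, does not lie in establishing the equivalence above but in identifying \emph{which} weights $w$ satisfy the Smirnov property---the substantive issue taken up in the remainder of the paper.
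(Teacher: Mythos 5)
Your proposal is correct and follows exactly the route the paper intends: the paper gives no separate proof beyond the phrase ``By Corollary \ref{cor: 1} we have,'' and your argument simply spells out that derivation---(i) $\Rightarrow$ (ii) via the converse clause of Corollary \ref{cor: 1}, and (ii) $\Rightarrow$ (i) via its (ii) $\Rightarrow$ (iii) leg followed by the Smirnov upgrade from $\mathcal L^1(w)$ to $\mathcal L^q(w)$. Your remark that the hypothesis \eqref{eq: bound ass} is tacitly carried over is an accurate observation about an assumption the paper's statement leaves implicit.
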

\subsection{Sufficient Conditions}\label{sec: suff}
The Smirnov property holds, if the density $w$ is the finite sum of product densities, each depending on a single variable only.
\begin{proposition}\label{lem: prod}
A density of the form $w=\sum_{j=1}^d \prod_{i=1}^n w^{(j)}_i(\xi_i)$, where $w^{(j)}_i\geq 0$ for
any $1\leq i\leq n$ and $1\leq j\leq d$, satisfies the Smirnov property.
\end{proposition}
\begin{proof}
Let $f(x)=\frac{1}{n}\sum_{i=1}^n \Psi_i(x_i)\in\mathcal L^q(w)$, where $\Psi_i\in\mathcal L^1(w)$, $1\leq i\leq n$.

Due to linearity, it suffices to show that $\Psi_i\in\mathcal L^q(w)$ relative to a product weight, that is $d=1$ and therefore $w=w_1(\xi_1)\cdot\dots\cdot w_n(\xi_n)$.  Furthermore, without loss of generality, we may assume that each $w_i$ integrates to one, $1\leq i\leq n$. These assumptions imply that  $w(\xi)=w_i(\xi_i) w_i^c(\xi_i^c)$ for any $1\leq i\leq n$.

As for any $1\leq i,j\leq n$, where $i\neq j$, $\Psi_j\in\mathcal L^1(w_i^c)$, It follows that $\Psi_i=n\int f(\xi)w_i^c(\xi_i^c)d\xi_i^c-\sum_{j\neq i}\int \Psi_j(\xi_j) w_j(\xi_j)d\xi_j$. Therefore, to establish the claim it suffices to show
that $\int f(\xi)w_i^c(\xi_i^c)d\xi_i^c\in\mathcal L^q(w)$.

By Jensen's inequality,
\[
\left| \int f(\xi)w_i^c(\xi_i^c)d\xi_i^c\right|^q\leq \int \vert f(\xi)\vert^q w_i^c(\xi_i^c)d\xi_i^c.
\]
Multiplying by $w$ and integrating all variables out, we get
\begin{align*}
\left\| \int f(\xi)w_i^c(\xi_i^c)d\xi_i^c\right\|^q_q&\leq \int \vert f(\xi)\vert^q w_i(\xi_i)w_i^c(\xi_i^c)d\xi\\&=\int \vert f(\xi)\vert^q w(\xi)d\xi=\|f\|_q^q<\infty,
\end{align*}
where the last inequality is by assumption, and the last identity is due to $w$ being a product of one-dimensional marginals,
that is, $w_i^c=\prod_{j\neq i}w_j$.
\end{proof}
\begin{remark}\label{rem: smirnov}
For $q=2$, Proposition \ref{lem: prod} allows a more instructive proof\footnote{I thank Alexander G Smirnov for pointing out this alternative proof.}. Assume, for simplicity, $n=2$ and
$w(x,y)=w_X(x) w_Y(y)$ (the general case is proved similarly). If $f(x), g(y)\in \mathcal L^1(w)$, and $f(x)+g(y)\in\mathcal L^2(w)$, then
\begin{align*}
&\int (f(x)+g(y))^2 w(x,y)dxdy-\int f(x) w_X(x)dx\cdot \int g(y) w_Y(y)dy\\&\qquad=\int \vert f(x)\vert^2 w_X(x)dx+\int \vert f(x)\vert^2 w_X(x)dx,
\end{align*}
and since the left side is finite, also each non-negative summand on the right one is.
\end{remark}
\begin{remark}\label{rem: discrete}
An example of practical nature involves discrete densities. Indeed, if one aims to solve equations \eqref{eq: constraints marginals p}--\eqref{eq: super constraint}, one typically discretises the weight, e.g., by setting the weights piecewise constant on a rectangular grid. (For simplicity, we use $n=2$, $U=[0,1)\times[0,1)$ and an equidistant grid of mesh-size $h=1/N$.) As the discretised $w$ can be written as
\[
w(x,y)=\sum_{i=1}^{N}\sum_{j=1}^N 1_{[(i-1)h, ih)}(x) 1_{[(j-1) h,jh)}(y),
\]
it also is of the form of Proposition \ref{lem: prod}. For discrete densities, the equations \eqref{eq: constraints marginals p}--\eqref{eq: super constraint} constitute a finite-dimensional system of non-linear equations.
\end{remark}

Another situation, where the Smirnov property holds, is characterized by essentially bounded likelihood ratios:
\begin{proposition}\label{lem: ess inf}
If 
\begin{equation}\label{eq: bound inf}
w_i w_i^c/w\in \mathcal L^\infty,\quad 1\leq i\leq n,
\end{equation}
then $w$ satisfies the Smirnov property.
\end{proposition}
\begin{proof}
Let $f(x)=\frac{1}{n}\sum_{i=1}^n \Psi_i(x_i)\in\mathcal L^q(w)$, where $\Psi_i\in\mathcal L^1(w)$, $1\leq i\leq n$. As for the proof of the previous Proposition, we only need to establish that $\int f(\xi)w_i^c(\xi_i^c)d\xi_i^c\in\mathcal L^q(w)$ for $1\leq i\leq n$. By Jensen's inequality,
\[
\left| \int f(\xi)w_i^c(\xi_i^c)d\xi_i^c\right|^q\leq \int \vert f(\xi)\vert^q w_i^c(\xi_i^c)d\xi_i^c.
\]
By assumption, there exists a positive constant $C$ such that $w_i w_i^c\leq C w$,
almost everywhere. Multiplying by $w$ and integrating all variables out, we get
\begin{align*}
\left\| \int f(\xi)w_i^c(\xi_i^c)d\xi_i^c\right\|^q_q&\leq \int \vert f(\xi)\vert^q w_i(\xi_i)w_i^c(\xi_i^c)d\xi\\&\leq C \int \vert f(\xi)\vert^q w(\xi)d\xi=\|f\|_q^q<\infty.
\end{align*}
\end{proof}
\begin{remark}\label{rem: x}
A bivariate standard normal density $w(x,y)$ with non-zero correlation has an unbounded likelihood ratio $w_X w_Y/w(x,y)$, thus does not satisfy condition \eqref{eq: bound inf}. Furthermore, this $w$ is also not of the form of Proposition \ref{lem: prod}, whence it is not clear whether $w$ satisfies the Smirnov property.
\end{remark}

\subsection{A Counterexample}\label{sec: counter}
There are densities $w$ which do not satisfy the Smirnov property. It suffices to demonstrate this in dimension $n=2$, using the domain $U=\mathbb R^2$. The following example is constructed in such a way that it violates any of the sufficient conditions formulated in the previous section to guarantee the Smirnov property (cf.~Remark \ref{rem: x} below). First,  $w_1 w_2/w\not\in\mathcal L^\infty$, thus  Proposition \ref{lem: ess inf},  does not apply. Second,  $w$ is not the finite sum of product densities (cf. Proposition \ref{lem: prod}, which demonstrates that the Smirnov robust is not robust under taking limits.

Let $q>1$ and $w_0:\mathbb R^2\rightarrow \mathbb R$ be a strictly positive ``background" density, and two functions $f(x)$, $g(y)$ that are piecewise constant on the sets $[i,i+1)$, where $i\geq 1$, satisfying further $f,g\in\mathcal L^q(w_0)$, whence $f,g\in\mathcal L^1(w_0)$. For the functions' values, we use the notation $f_i:=f(i)$ and $g_i:=g(i)$.

Let $(\theta_i)_{i=1}^\infty$ be a sequence of positive numbers summing to one such that
\begin{equation}\label{eq: sur}
\sum_{i=1}^\infty\vert f_i\vert^q\theta_i=\sum_{i=1}^\infty \vert g_i\vert^q\theta_i=\infty.
\end{equation}
In addition, assume
\begin{equation}\label{eq: l1}
\sum_{i=1}^\infty \vert f_i\vert\theta_i<\infty,\quad \sum_{i=1}^\infty \vert g_i\vert\theta_i<\infty.
\end{equation}
We further assume that
\begin{equation}\label{eq: sum2}
\sum_{i=1}^n\vert f_i+g_i\vert ^q\theta_i <\infty.
\end{equation}
(This can, e.g., be achieved by setting $f_i=-g_i$ for any $i\geq 1$.)
Then for some $\alpha\in (0,1)$, the function $w$, defined by
\begin{equation}\label{eq: counter w}
w(x,y):=\alpha w_0(x,y)+(1-\alpha)\sum_{i=1}^\infty \theta_i 1_{[i,i+1)}(x)1_{[i,i+1)}(y)
\end{equation}
is a strictly positive density on $U$. By eq.~\eqref{eq: l1}, $f, g\in\mathcal L^1(w)$ and due to \eqref{eq: sum2}, 
\begin{align*}
&\int \vert f(x)+g(y)\vert^q w(x,y)dx dy\\&\qquad=\int \vert f(x)+g(y)\vert^q w_0(x,y)dx dy+\sum_{i=1}^\infty \theta_i \vert f_i+g_i\vert ^q <\infty,
\end{align*}
but due to \eqref{eq: sur}, $f$, $g\notin\mathcal L^q(w)$.
\begin{remark}\label{rem: x}
\begin{itemize}
\item This counterexample is constructed such that most of the mass of $w$ is concentrated around the diagonal, thereby mimicking strong dependence. The addition of the background density $w_0$ makes the example density strictly positive -- which is a standing assumption of the paper. The latter, in turn, is imposed to keep likelihood ratios, such as \eqref{eq: bound ass} or \eqref{eq: bound inf} well-defined. 
\item The density $w$ violates any of the sufficient conditions formulated in the previous section to guarantee the Smirnov property. First,  $w$ is not the finite sum of product densities (cf. Proposition \ref{lem: prod}), which demonstrates that the Smirnov robust is not robust under taking limits. Also, the likelihood ratio $w_1 w_2/w\not\in\mathcal L^\infty$, thus  Proposition \ref{lem: ess inf},  does not apply. 
\item The counterexample suggests to choose $f=-g$, which implies that $f(x)+g(y)$ cannot be non-negative. In financial applications, where the sum is related to the stochastic discount factor (cf. equation \eqref{eq: sum rel} above, as well as \cite[Figure EC.2 and Section EC.5.2]{gm}), negative signs lead to negative prices of certain, typically not traded, basket options. On the other hand, if $f,g\geq 0$, then such counterexample does not exist. In fact, since for any $q>1$, we have by Jensen's inequality,
\[
f^q(x)+g^q(y)\leq (f(x)+g(y))^q
\]
and thus $f(x)+g(y)\in\mathcal L^q(w)$ implies $f$, $g\in \mathcal L^q(w)$, which conflicts with assumption \eqref{eq: sur}, or \eqref{eq: sum2} cannot be satisfied.

\end{itemize}

\end{remark}

\section{Main Results}\label{sec: mth}
\subsection{Theorem  and Proof}
\begin{theorem}\label{th: main4}
Let $p>1$, and assume that $w$ satisfies \eqref{eq: bound ass}. If $g\in\mathcal L^p(w)$ is such that
$g_i\in\mathcal L^p(w)$ for $1\leq i\leq n$, then it satisfies the bound
\begin{equation}\label{eq: estimate a}
\int \vert g(\xi)\vert ^pw(\xi) d\xi\geq \int\vert\overline\Phi(\xi)\vert^{\frac{p}{p-1}}w(\xi)d\xi,
\end{equation}
where 
\[
\overline \Phi(\xi):=\frac{1}{n}\sum_{i=1}^n \Phi_i(\xi_i)
\]
and $\Phi_i$ are the solutions of the system of integral equations \eqref{eq: constraints marginals p}--\eqref{eq: super constraint}.

If $w$ satisfies the Smirnov property, then the solutions are unique and equality holds in \eqref{eq: estimate a} if and only if
\begin{equation}\label{gp}
g(\xi)=\sign\left(\overline \Phi(\xi)\right)\left\vert\overline \Phi(\xi)\right\vert^{ \frac{1}{p-1} }.
\end{equation}
\end{theorem}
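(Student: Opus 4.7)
The approach is to cast the estimate as a constrained minimization in the uniformly convex space $\mathcal L^p(w)$, extract the $\Phi_i$ as Lagrange-type multipliers from the first-order orthogonality condition via Corollary \ref{cor: 1}, and only afterwards invoke the Smirnov property to close the uniqueness step. First, I would consider the affine subspace
\[
\mathcal M := \left\{h \in \mathcal L^p(w) : \int h(\xi)\, w(\xi)\, d\xi_i^c = g_i(\xi_i),\ 1\leq i\leq n\right\} = g + \mathcal N,
\]
which is nonempty by hypothesis and closed because $\mathcal N$ is closed (Lemma \ref{lem: lem2}). Uniform convexity of $\mathcal L^p(w)$ for $p > 1$ then provides a unique minimizer $h^* \in \mathcal M$.

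Applying Lemma \ref{lemx} with $Y = \mathcal N$, the function $\sign(h^*)|h^*|^{p-1} \in \mathcal L^q(w)$ annihilates $\mathcal N$. Since \eqref{eq: bound ass} is in force, the implication (ii)$\Rightarrow$(iii) of Corollary \ref{cor: 1} produces
\[
\sign(h^*(\xi))\,|h^*(\xi)|^{p-1} = \overline\Phi(\xi) := \tfrac{1}{n}\sum_{i=1}^n \Phi_i(\xi_i), \qquad \Phi_i \in \mathcal L^1.
\]
Equivalently $h^* = \sign(\overline\Phi)|\overline\Phi|^{1/(p-1)}$; the marginal constraints for $h^*$ are then exactly \eqref{eq: constraints marginals p}, and the normalization \eqref{eq: super constraint} is enforced by shifting additive constants between $\Phi_1$ and the remaining $\Phi_i$ so that their total vanishes, which leaves $\overline\Phi$ unchanged. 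The estimate \eqref{eq: estimate a} is now immediate: for any $g \in \mathcal M$, $\|g\|_p \geq \|h^*\|_p$ and $\|h^*\|_p^p = \int |\overline\Phi|^{p/(p-1)}\, w\, d\xi$.

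Under the Smirnov property, sharpness follows from uniqueness of the $\mathcal L^p$-minimizer: equality in \eqref{eq: estimate a} forces $g = h^*$, which is precisely \eqref{gp}. For uniqueness of the $\Phi_i$, let $\{\tilde\Phi_i\}$ be another solution of \eqref{eq: constraints marginals p}--\eqref{eq: super constraint} and set $\tilde h := \sign(\overline{\tilde\Phi})|\overline{\tilde\Phi}|^{1/(p-1)}$; the constraints place $\tilde h \in \mathcal M \subset \mathcal L^p(w)$, hence $\overline{\tilde\Phi} \in \mathcal L^q(w)$. The Smirnov property is exactly what upgrades each $\tilde\Phi_i$ from $\mathcal L^1$ to $\mathcal L^q(w)$, so the converse direction of Corollary \ref{cor: 1} (its (iii)$\Rightarrow$(ii)) returns orthogonality of $\overline{\tilde\Phi}$ to $\mathcal N$, Lemma \ref{lemx} identifies $\tilde h$ as a minimizer of $\|\cdot\|_p$ in $\mathcal M$, and $\tilde h = h^*$ follows, hence $\overline{\tilde\Phi} = \overline\Phi$ almost everywhere. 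A pointwise argument (freeze all but one coordinate at reference values) then shows each $\tilde\Phi_i - \Phi_i$ is a.e.~constant in its single variable, and the normalization \eqref{eq: super constraint} forces these constants to vanish.

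The main obstacle is the uniqueness step: the Smirnov property is the precise bridge between the $\mathcal L^1$-regularity of the additive summands (automatically produced by the forward direction of Corollary \ref{cor: 1}) and the $\mathcal L^q$-regularity required by its converse direction. Without Smirnov, one cannot rule out pathological solutions of the system \eqref{eq: constraints marginals p}--\eqref{eq: super constraint} whose $\Phi_i$ fail to lie in $\mathcal L^q(w)$, and the identification of the associated $h$ with the unique $\mathcal L^p$-minimizer $h^*$ breaks down.
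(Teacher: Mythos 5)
Your proposal is correct and follows essentially the same route as the paper: minimal-norm element of the closed convex set $\mathcal M$, orthogonality to $\mathcal N$ via Lemma \ref{lemx}, extraction of the additive $\mathcal L^1$ summands from Corollary \ref{cor: 1}, and the Smirnov property to upgrade competing solutions to $\mathcal L^q(w)$ so that the converse direction of Corollary \ref{cor: 1} identifies them with the unique minimizer. The only cosmetic difference is the last bookkeeping step, where you recover the individual $\Phi_i$ from $\overline\Phi=\overline\Psi$ by freezing coordinates rather than by integrating against the marginal densities as the paper does; both reduce to the normalization \eqref{eq: super constraint} killing the additive constants.
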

\begin{proof}
Note that it is not obvious (but can be proved, under extra assumptions on $g$) that $g_i:=\int g(\xi)w(\xi)d\xi_i^c\in\mathcal L^p(w)$ for $1\leq i\leq n$, hence we have assumed it. For the proof, we follow the lines of the corresponding proof of \cite[Theorem 2.5]{gmz}, making the appropriate adaptions, especially concerning the inclusion of weights and references to the relevant adaption made in the present paper for dealing with the non-Hilbertian cases.

By assumption, the set
\[
\mathcal M:=
\left\{h\in \mathcal L^p \Big| \int h(\xi) w(\xi)d\xi_i^c= g_i(\xi_i),
\quad 1\leq i\leq n\right\}
\]
is well-defined, and it is non-empty because $g\in\mathcal M$. The set is convex, by construction. To show that it is closed, let $h_n\in\mathcal M$ and $\lim_{n\rightarrow \infty} h_n=h$ in $\mathcal L^p$. Then the sequence $(h_n)_{n\ge 1}$ is uniformly integrable, hence by Vitali's convergence theorem, $\xi_i$- almost everywhere,
\[
\int h(\xi)w(\xi)d\xi_i^c=\int \lim_{n\rightarrow\infty} h_n(\xi)w(\xi)d\xi_i^c=\lim_{n\rightarrow\infty }\int h_n(\xi) w(\xi)d\xi^c=g_i(\xi_i),
\]
which proves that $h\in \mathcal M$, whence $\mathcal M$ is a closed, convex and non-empty set. Denote by $h_*$ the unique element in $\mathcal M$ of smallest norm.\footnote{In a strictly convex and reflexive Banachspace, any
non-empty, closed convex set has an element of smallest norm, see \citep[Corollary 5.1.19]{megginson2012introduction}.} We claim that $h_*=g$, where $g$ is defined in \eqref{gp}. To this end, introduce the function space
\begin{equation}\label{eq: Ndef}
\mathcal N:=\left\{\phi\in \mathcal L^p\Big | \int \phi(\xi)w(\xi)d\xi_i^c\equiv 0, 1\le i\le n \right \},
\end{equation}
which is closed also (set $g=0$ in the definition of $\mathcal M$, in which case $\mathcal M=\mathcal N$, and use the fact that $\mathcal M$ is closed, as is proved above).
By the minimality of $h_*$, it follows that for any $\varepsilon>0$ and any $\phi\in \mathcal N$
\begin{equation}\label{eq: min}
\|h_*\pm \varepsilon \phi\|_p^p-\|h_*\|_p^p\geq 0,
\end{equation}
and therefore, by Lemma \ref{lemx},
\begin{equation}\label{eq: ortho}
\int \sign(h_*(\xi))\vert h_* (\xi)\vert^{p-1} \phi(\xi)w(\xi) d\xi=0,\quad \phi\in\mathcal N.
\end{equation}
(Note that 
\begin{equation}\label{eq: it is good}
\vert h_*\vert^{p-1}\in L^q(w), 
\end{equation}
where $q=\frac{p}{p-1}$, hence the above pairing is finite, by H\"older's inequality.) Corollary \ref{cor: 1} yields
\[
\sign(h_*(\xi))\vert h_*(\xi)\vert^{p-1}=\overline \Phi(\xi), \quad \text{where}\quad \overline\Phi(\xi):=\frac{1}{n}\sum_{i=1}^n \Phi_i(\xi_i),
\]
with measurable functions 
\begin{equation}\label{eq: it is even better}
\Phi_i(\xi_i)\in\mathcal L^1(w),\quad 1\le i\le n, 
\end{equation}
each depending on one variable $\xi_i$ only. Because $\sign(h_*)=\sign(\overline\Phi(\xi))$, it follows that
\begin{equation}\label{eq super xx}
h_*(\xi_1,\dots,\xi_n)=\sign( \overline\Phi(\xi))\left\vert\overline\Phi(\xi)\right\vert^{\frac{1}{p-1}}
\end{equation}
and $\overline \Phi$ solves the nonlinear integral equations \eqref{eq: constraints marginals p} for $1\leq i\leq n$. As these equations involve the sum $\overline \Phi$ only, we can satisfy the extra constraints \eqref{eq: super constraint}, by replacing $\Phi_i$ by $\Phi_i-\int \Phi_i(\xi_i)w_i(\xi_i)d\xi_i$ ($2\leq i\leq n$), if necessary.

It remains to show the uniqueness. Assume, in addition, that \( w \) satisfies the Smirnov property, as defined in Definition \ref{def: Smirnov}. Then, due to \eqref{eq: it is good} (which implies that \( \overline{\Phi} \in \mathcal{L}^q(w) \)) and \eqref{eq: it is even better}, we infer from the Smirnov property that \( \Phi_i \in \mathcal{L}^q(w) \) for \( 1 \leq i \leq n \). Assume that, in addition to \( \overline{\Phi} \), the function \( \overline{\Psi}(\xi) := \frac{1}{n} \sum_{i=1}^n \Psi_i(\xi_i) \) also solves \eqref{eq: constraints marginals p}--\eqref{eq: super constraint}. By Corollary \ref{cor: 1}, the function \( h := \text{sign}(\overline{\Psi}) \cdot |\overline{\Psi}|^{\frac{1}{p-1}} \) is orthogonal to \( \mathcal{N} \) defined in \eqref{eq: Ndef}. Furthermore, by \eqref{eq: constraints marginals p}, \( h - h_* \in \mathcal{N} \), hence by the definition of orthogonality, we find that \( \|h\|_p \leq \|h_*\|_p \). In view of \eqref{eq: min}, it follows that \( h = h_* \), whence also \( \overline{\Psi} = \overline{\Phi} \). As \( \overline{\Phi}(\xi) = \frac{1}{n} \sum_{i=1}^n \Phi_i(\xi_i) = \frac{1}{n} \sum_{i=1}^n \Psi_i(\xi_i) =: \overline{\Psi}(\xi) \) almost everywhere, the extra constraints given by \eqref{eq: super constraint} yield, upon integrating \( n\overline{\Phi} = n\overline{\Psi} \) with respect to \( w_i(\xi_1) d\xi_1 \), that \( \Phi_1(\xi_1) = \Psi_1(\xi_1) \) \( w_1 \)-almost everywhere (as the rest of the integrals vanish). Applying the constraint for \( i = 2 \), it follows that

\begin{align*}
\int \Phi_1(\xi_1) w_1(\xi_1) d\xi_1 + \Phi_2(\xi_2) + 0 &= \int \Psi_1(\xi_1) w_1(\xi_1) d\xi_1 + \Psi_2(\xi_2) + 0 \\&= \int \Phi_1(\xi_1) w_1(\xi_1) d\xi_1 + \Psi_2(\xi_2),
\end{align*}

from which it follows that \( \Phi_2(\xi_2) = \Psi_2(\xi_2) \) \( w_2 \)-almost everywhere. Continuing similarly for \( 3 \leq i \leq n \), it follows that \( \Phi_i = \Psi_i \) \( w_i \)-almost everywhere for \( 3 \leq i \leq n \).

\end{proof}

\subsection{A Counterexample concerning Uniqueness}\label{sec nonuniq}

Using the density $w$ in equation \eqref{eq: counter w}, we can see that, if the conditions in Theorem \ref{th: main4} are violated, uniqueness for the integral equations fails. To keep the example simple, we consider only the Hilbertian case, that is, $p=2$. 

The weight $w$ in Section \ref{sec: counter}, does not satisfy the Smirnov property. Let us use $f_i=-g_i$, for all $i\geq 1$, then $f(x)=\sum_{i=1}^\infty f_i  1_{[i,i+1)}(x)=-g(x)$ and thus $f(x)+g(y)\in\mathcal L^2(w)$. Then for appropriate choices of $f_i$, $i\geq 1$, $f,g\in\mathcal L^1(w)$, but $f,g\notin\mathcal L^2(w)$. 

Let us take the extreme case $\alpha=0$ in \eqref{eq: counter w}, which is excluded in Section \ref{sec: counter}. In this case $w$ is supported around the diagonal, and vanishes away from it. Studying uniqueness, we assume that the marginals $g_1=g_2=0$. Then the integral equations  \eqref{eq: constraints marginals p}--\eqref{eq: super constraint} are linear  
\begin{align}\label{eq1}
&f(x) w_X(x)+\int g(y)  w(x,y)dy=0,\\\label{eq2}
&g(y) w_Y(y)+\int f(x) w(x,y)dx=0,\\\label{eq3}
&\int g(y)w_Y(y)dy=0.
\end{align}

$f=g=0$ satisfy these equations. But also non-trivial solutions can be constructed, as \eqref{eq3} is easy to satisfy, for instance, if one sets $f_1:=-\frac{\sum_{i=2}^\infty f_i \theta_i}{\theta_1}$, then,
\[
\int g(y) w_Y(y)dy=-\sum_{i=1}^\infty f_i \theta_i=0.
\]
Due to symmetry of $w$ and $f=-g$, equations \eqref{eq1} and \eqref{eq2} are collinear. Furthermore, for any $k\geq 1$, and $x\in [k,k+1)$ we have $f(x)=f_k$,
and $w(x,y)=\theta_k 1_{[k,k+1)}(y)$, therefore \eqref{eq1} becomes
\[
f_k \theta_k-\int\left( \sum_{j=1}^\infty f_j 1_{[j,j+1]}(y)\right)\theta_k 1_{[k,k+1)}(y)dy=f_k\theta_k-\int_k^{k+1} f_k\theta_k dy=0.
\]

In this counterexample, the condition \eqref{eq: bound ass} is violated, as the integral $\int w_X^2 w_Y^2/w$ is infinite (because the denominator $w$ vanishes away from the ``diagonal"
\[
\bigcup_{k=1}^\infty [k,k+1)\times [k,k+1)\subset \mathbb R^2,
\]
while the product $w_X\times w_Y$ is strictly positive on $[1,\infty)\times [1,\infty)$.

\subsection{Mixture models}\label{sec: mixtures}
Mixture densities, as discussed in \cite{lachlan}, serve as a powerful tool in various applications for modeling complex dependencies by combining simpler, well-understood components. By Proposition \ref{lem: prod}, certain mixture densities also satisfy the Smirnov property, namely those who are mixtures (that is, sums) of product densities, where each factor depends on one variable only (in the following we abbreviate these as ``one-mixtures").  Even though each summand in such a mixture represents the density of $n$, independent random variables, mixing does not imply the same.  In particular, such mixing allows to model non-zero correlation. The special feature of one-mixtures in the $\mathcal L^2$ context of this paper turn the (linear) integral equations into a system of linear equations, which are particularly easy to treat. \footnote{The general case of mixture distributions, without reference to the Smirnov property, was introduced by \cite{gm}, but was not explored in depth. Since their constraints and mixture models differ slightly from ours, the linear equations also exhibit some differences. Most notably, \cite{gm} does not demonstrate that the system of \( 2n \) equations in \( 2n \) unknowns has maximal rank, and thus they do not establish the unique solvability of the system.}

As an example, we mix $k$ bivariate densities $w_X^i(x) w_Y^i(y)$. Note that, while it is unknown, whether the Smirnov property holds for the any bivariate  density, not even for normal ones (cf.~Remark \ref{rem: x}), we have this property for one-mixtures due to Proposition \ref{lem: prod}. With weights $\alpha^i\in (0,1)$, the mixture density takes the form
\[
w(x,y)=\sum_{i=1}^k \alpha^i w_X^i(x) w_Y^i(y),
\]
where $\sum_{i=1}^k \alpha^i=1$, which normalizes the weight $w$ to having unit mass. Thus the marginals of $w$ are given by
\[
w_X(x)=\sum_{i=1}^k \alpha^i w_X^i(x),\quad w_Y(y)=\sum_{i=1}^k \alpha^i w_Y^i(y).
\]

An inspection of the integral equations \eqref{eq1xx}--\eqref{eq2xx} reveals that the element of minimal norm is of the form $\frac{F(x)+G(y)}{2}$, where
\begin{equation}\label{eq: F1}
F(x)=\frac{2g_X(x)-\sum_{i=1}^k \alpha^i c_Y^i w_X^i(x)}{w_X(x)}, \quad c_Y^i:=\int_{-\infty}^\infty G(y)w_Y^i(y)dy
\end{equation}
and, quite similarly,
\begin{equation}\label{eq: G1}
G(y)=\frac{2g_Y(y)-\sum_{i=1}^k \alpha^i c_X^i w_Y^i(y)}{w_Y(y)}, \quad c_X^i:=\int_{-\infty}^\infty F(x)w_X^i(y)dx.
\end{equation}
This appears a recursive problem, but we actually have reduced the problem to finding the $2k$ constants $c_X^i$
and $c_Y^i$ ($1\leq i\leq k$): Plugging the Ansatzes for $F,G$ from the left sides of \eqref{eq: F1}--\eqref{eq: G1} back into the integral equations \eqref{eq1xx}--\eqref{eq3xx} yields the linear equations\footnote{Note that all the integrals are finite, because 
\begin{align*}
&\int_{-\infty}^\infty\frac{w_Y^i(y) w_Y^j(y)}{w_Y(y)}dy\leq\int_{-\infty}^\infty \frac{1}{2}\frac{(w_Y^i(y))^2+(w_Y^j(y))^2}{w_Y(y)}dt\\&\leq \int_{-\infty}^\infty\frac{1}{2\alpha^i}w_Y^i(y)dy+\frac{1}{2\alpha^j}w_Y^j(y)dy=\frac{1}{2\alpha^i}+\frac{1}{2\alpha^j}
\end{align*}
and
\[
\int_{-\infty}^\infty\frac{g_Y(y) w_Y^i(y)}{w_Y(y)}dy\leq \frac{1}{\alpha^i}\int_{-\infty}^\infty g_Y(y)dy=\frac{1}{\alpha^i}.
\]
(The rest of the integrals are estimated similary.)}
\begin{align}\label{eq: mix1}
c_Y^i+\sum_{j=1}^k \alpha^j c_X^j\int_{-\infty}^\infty\frac{w_Y^i(y) w_Y^j(y)}{w_Y(y)}dy&=\int_{-\infty}^\infty\frac{2g_Y(y) w_Y^i(y)}{w_Y(y)}dy,\; 1\leq i \leq k,\\\label{eq: mix2}
c_X^i+\sum_{j=1}^k \alpha^j c_Y^j \int_{-\infty}^\infty \frac{w_X^i(x) w_X^j(x)}{w_X(x)}dx&=\int_{-\infty}^\infty \frac{2g_X(x) w_X^i(x)}{w_X(x)}dx,\;1\leq i \leq k,\\\label{eq: mix3}
\sum_{i=1}^k \alpha^i c_X^i&=2.
\end{align}
These are $2k+1$ equations in $2k$ unknowns, but the first $2k$ equations are not linearly independent, because the sum of equations $1$ to $k$ is equal to the sum of equations $k+1$ to $2k$. In view of the second part of Theorem \ref{th: main4}, which guarantees uniquees of the equations, we may strike one of the first $2k$ equations to obtain a system of maximal rank.


For a concrete example, let us sample from a bivariate normal distribution with zero mean, unit variances and correlation parameter $\rho>0$, that is a normal distribution on $\mathbb R^2$ with parameters $\mu=(0,0)^\top$, and variance-covariance matrix
\[
\Sigma=\left(\begin{array}{ll} 1 & \rho\\ \rho &1\end{array}\right).
\]
We obtain parameter estimates for a normal mixture model with two components ($k=2$), each with means ($\mu_X^i$, $\mu_Y^i$) and diagonal covariances $\Sigma^i=\diag(\sigma_X^i,\sigma_Y^i)$, for $1\leq i\leq k$ (cf.~ Figure \ref{fig: 1}). 
The fact that we only know that normal mixture distributions with diagonal covariances satisfy the Smirnov property (but not of correlated ones, cf.~ Remark \ref{rem: x}) works to our advantage here, because assuming diagonal covariance avoids overfitting. We have also experimented with the number of components, only realizing overfitting occuring for three or more mixing densities, which suggested to keep $k=2$.

\begin{figure}[h]
    \centering
    \includegraphics[width=0.8\textwidth]{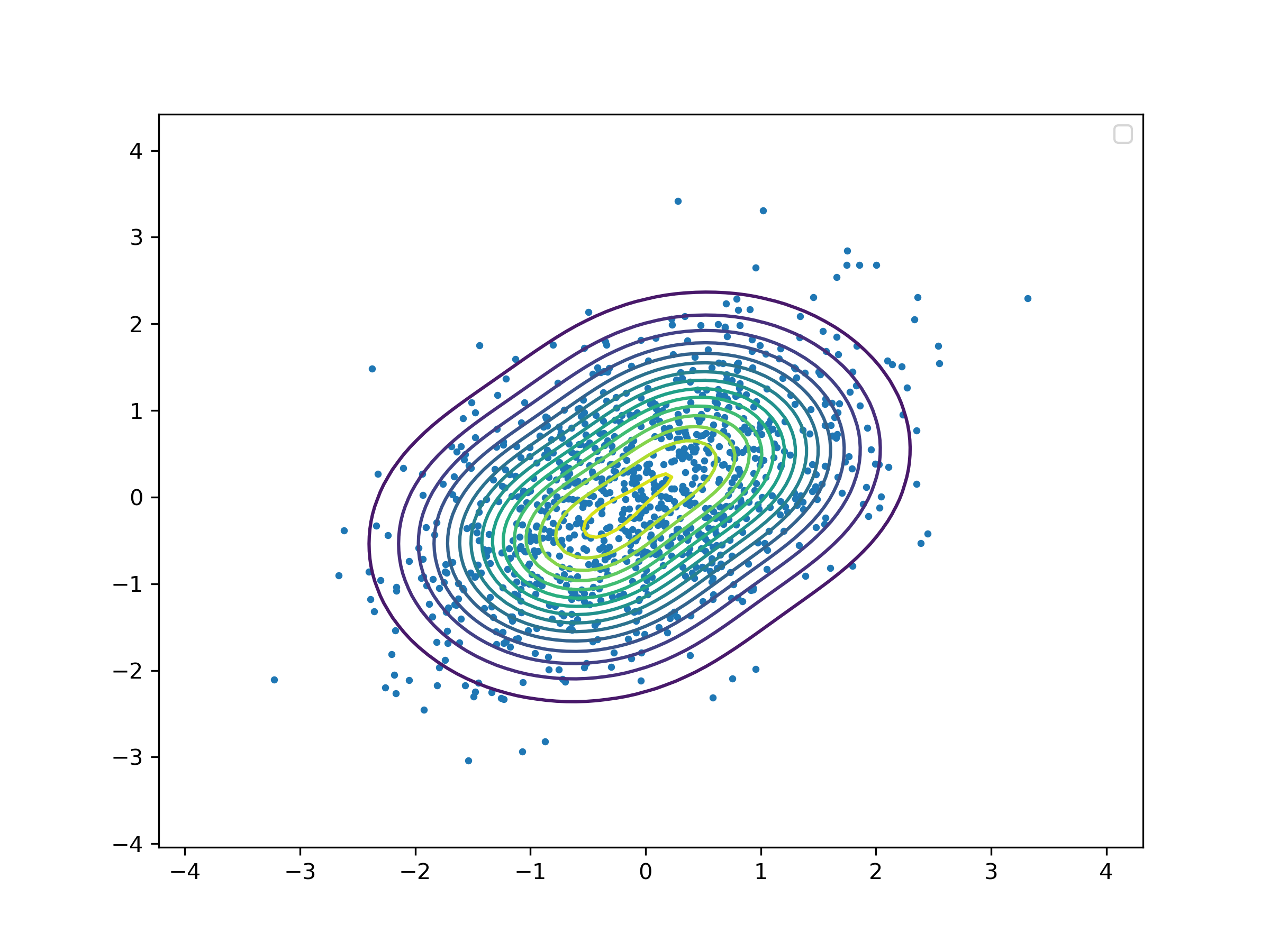}
    \caption{This 2D Contour plot depicts the fit of a Gaussian mixture model  to a sample of size $1000$ from a correlated bivariate normal distribution with zero means, unit variances and correlation $\rho=0.5$. The fitted normal mixture is comprised of two normal densities, each with zero correlation. Their estimated weights $\alpha_i$ are essentially the same, with the first one $\alpha_1=0.0.498582$. The other parameter estimates are $\mu^1=(-0.64,-0.62)^\top$, $\mu^2=(0.62,0,62)$, $\Sigma^1=\diag(0.62, 0.62)$ and $\Sigma^1=\diag(0.61, 0.64)$.}
    \label{fig: 1}
\end{figure}
The estimation of Figure \ref{fig: 1} suggests that $\alpha=1/2$, $\mu_X^1=\mu_Y^1=-a<0$, and $\mu_X^2=\mu_Y^2=a$, and $\Sigma^1=\Sigma^2=\diag(a^2,a^2)$. This implies that
\[
w_Y(y)=\frac{1}{2b\sqrt{2\pi}}e^{-\frac{(a+y)^2}{2a^2}}\left(1+e^{\frac{2y}{a^2}}\right),\quad x\in\mathbb R
\]
and, by symmetry,
\[
w_X(x)=w_Y(x), \quad x\in\mathbb R.
\]
We thus get the analytic expressions
\[
\kappa_1:= \int_{-\infty}^\infty \frac{w_Y^1(y) w_Y^2(y)}{w_Y(y)}dy= \int_{-\infty}^\infty \frac{w_X^1(x) w_X^2(x)}{w_X(x)}dx=\int_{-\infty}^\infty \sqrt{\frac{2}{\pi }}\frac{e^{-\frac{(a-x)^2}{2a^2}}dx}{a(1+e^{\frac{2x}{a}})}
\]
and
\[
\kappa_2:= \int_{-\infty}^\infty\frac{(w_Y^1(y))^2}{w_Y(y)}dy=\int_{-\infty}^\infty \frac{(w_X^1(x))^2 }{w_X(x)}dx=\int_{-\infty}^\infty\sqrt{\frac{2}{\pi }}\frac{e^{-\frac{(a+x)^2}{2a^2}}dx}{a(1+e^{\frac{2x}{a}})}.
\]
(For example, if $a=0.65$ we get $\kappa_1\approx 0.4496$ and $\kappa_2\approx 1.5504$.)
Assuming standard normal marginals $g_X,g_Y$, we further get\footnote{The result is actually exact, because the integrand, which is of the form 
\[
\frac{1}{\sqrt{\pi}}\frac{e^{(a-x)}x}{1+e^{ax}},
\]
where $a>0$, is a density itself.}
\[
\int_{-\infty}^\infty\frac{g_X(x) w_X^i(x)}{w_X(x)}dx=\int_{-\infty}^\infty\frac{g_Y(y) w_Y^i(y)}{w_Y(y)}dy=1.
\]
Thus, the five equations \eqref{eq: mix1}, \eqref{eq: mix2} and \eqref{eq: mix3} take the form
\begin{align*}
c_Y^1+\frac{\kappa_2}{2} c_X^1+\frac{\kappa_1}{2} c_X^2&=2,\\
c_Y^2+\frac{\kappa_1}{2} c_X^1+\frac{\kappa_2}{2} c_X^2&=2,\\
c_X^1+\frac{\kappa_2}{2} c_Y^1+\frac{\kappa_1}{2} c_Y^2&=2,\\
c_X^2+\frac{\kappa_1}{2} c_Y^1+\frac{\kappa_2}{2} c_Y^2&=2,\\
c_X^1+c_X^2&=4.
\end{align*}
Since $\kappa_1+\kappa_2=2$, one of the first four equations is redundant, and thus can be stricken out. The unique solution of this system is\footnote{The system is of maximal rank, as the determinant of the coefficient matrix is given by $1-(\kappa_1-\kappa_2)^2/4$, which must be non-zero because $\kappa_{1,2}\neq 0$ and $\kappa_1+\kappa_2=2$.}
\[
c_X^1=c_X^2=2,
\]
and
\[
c_Y^1=c_Y^2=0.
\]

Thus, the solution is
\[
\frac{F(x)+G(y)}{2}=\frac{g_X(x)}{w_X(x)}+\frac{g_Y(y)-\frac{w_Y^1(y)+w_Y^2(y)}{2}}{w_Y(y)}=\frac{g_X(x)}{w_X(x)}+\frac{g_Y(y)}{w_Y(y)}-1.
\]
Note that, despite the weight being a non-trivial mixture distribution, the solution
is of the same functional form as if $w$ were a product density (that is, of the form $w(x,y)=w_X(x)w_Y(y)$, where the equations \eqref{eq1xx}--\eqref{eq3xx} immediately gives that solution.) 

\section{Conclusion}
In this paper, we have examined a key characteristic of multivariate weights—the Smirnov property—which plays a crucial role in identifying sharp lower \( p \)-norm estimates for Lebesgue-measurable functions subject to specific marginal constraints \eqref{eq: constraints marginals p}. These constraints imply that minimal solutions take a specific functional form: powers of arithmetic averages of functions, each depending on a single univariate argument (see eq. \eqref{eq: sum rel} and Theorem \ref{th: main4}). This formulation enables us to rewrite the problem as a system of (non-)linear integral equations, subject to constraints \eqref{eq: constraints marginals p}--\eqref{eq: super constraint}. The unique solvability of these equations is ensured by the Smirnov property. Several important questions for future research emerge from this work:

As a consequence of Proposition \ref{lem: prod}, any weight $w$ can be approximated either by discrete distributions with compact support (see Remark \ref{rem: discrete}) or by mixture models (Section \ref{rem: discrete}), in such a way that the approximating weight satisfies the Smirnov property. However, some well-known weights, such as the bivariate normal density (cf.~Remark \ref{rem: x}), which are frequently used in modeling, do not satisfy the conditions of Proposition \ref{lem: prod} or the bound in \eqref{eq: bound inf}. Consequently, it is currently unknown whether these weights satisfy the Smirnov property or allow for the unique solvability of the integral equations. To deepen the understanding of this issue, we provide a counterexample in Section \ref{sec: counter} that violates the Smirnov property, as well as another counterexample in Section \ref{sec nonuniq} showing that uniqueness may fail if the integrability condition of Theorem \ref{th: main4} is not met.

For the Hilbertian case, where \( p = 2 \), this paper addresses issues related to identifying the minimal stochastic discount factor (SDF), a topic that has been extensively studied by \cite{gm} in the context of options portfolio selection using a mean-variance criterion. We establish that this SDF is the unique solution of the associated integral equations, provided the Smirnov property holds (thus extending the results of \cite[Theorem 1 (iii)]{gm} with Theorem \ref{th: main4}). In this framework, the marginal constraints are determined by observed option prices on a single underlying asset. Since only a finite number of options are traded in practice, one could directly model the problem using discrete distributions, which inherently satisfy the Smirnov property (see Remark \ref{rem: discrete}). Consequently, a minimal SDF can be uniquely identified in this discrete case. However, when continuous distributions are employed to model dependence structures, identifying the correct solution becomes more challenging, as it is unclear whether the Smirnov property holds. Typically, these equations are solved numerically, with each discretization yielding a unique solution. An open problem remains as to whether successive refinements of the discretization meshes could lead to a well-defined and correct solution in the limit as the mesh size tends to zero.

A comparable duality theory for investors aiming to maximize the power utility of terminal wealth leads to problems in weighted $L^p$ spaces, where $0<p<1$, and thus in non-convex Banach spaces. Consequently, the "orthogonality" Lemma \ref{lemx} does not apply in this context, leaving the analysis of this important problem entirely open. The ramifications of this will be addressed in future research.

Another area for future research involves the optimal selection of options with not only many strikes, but different maturities. Such a problem results in more complicated systems of integral equations, because not only one density needs to be fitted to marginals, but entire finite dimensional distributions. A related problem, though with less conventional objectives, was addressed by \cite{malamud}, who aimed to identify multivariate transition densities of a Markov chain.

\end{document}